\newtheorem{theorem}{Theorem}
\newtheorem{definition}{Definition}
\newtheorem{lemma}{Lemma}
\newtheorem{remark}{Remark}
\newtheorem{proposition}{Proposition}
\newtheorem{problem}{Problem}
\newcommand{\yong}[1]{{\color{black} #1}}
\title{\LARGE \bf
Mesh-Based Affine Abstraction of Nonlinear Systems with Tighter Bounds
}
\author{Kanishka Raj Singh, Qiang Shen and Sze Zheng Yong% <-this % stops a space
%\thanks{*This work was not supported by any organization}% <-this % stops a space
\thanks{The authors are with School for Engineering of Matter, Transport and Energy,
Arizona State University, Tempe, USA
       (email: {\tt \{kanishka.r.singh,qiang.shen,szyong\}@asu.edu})}%
       \thanks{This work was supported in part by DARPA grant D18AP00073. Toyota Research Institute (``TRI") also provided funds to assist the authors with their research but this article solely reflects the opinions and conclusions of its authors and not TRI or any other Toyota entity.}
}
\begin{document}

\maketitle

\begin{abstract}
In this paper, we consider the problem of piecewise affine abstraction of nonlinear systems, i.e., \yong{the over-approximation of its nonlinear dynamics by a pair of piecewise affine functions that ``includes'' the dynamical characteristics of the original system.} As such, guarantees for controllers or estimators based on the affine abstraction also apply to  the original nonlinear system. Our approach consists of solving a linear programming (LP) problem that over-approximates the nonlinear function at only the grid points of a mesh with a given resolution and then accounting for the entire domain via an appropriate correction term. To achieve a desired approximation accuracy, we also iteratively subdivide the domain into subregions.
%Firstly, we partition the state space into hyper intervals using tighter interpolation error bounds, which improve the abstraction efficiency and reduce the number of subregions. Next, two piecewise affine functions are found to construct lower and upper boundaries of the original nonlinear system in each subregion. It is shown in simulation that 
 Our method applies to nonlinear functions with different degrees of smoothness, including Lipschitz continuous functions, and improves on existing approaches by enabling the use of tighter bounds. Finally, we compare the effectiveness of our approach with existing optimization-based methods in simulation and illustrate its applicability for estimator design.
\end{abstract}

%\begin{IEEEkeywords}
%NonLinear Systems, Affine Abstraction
%\end{IEEEkeywords}

%!TEX root = main.tex
\section{Introduction}
Abstraction-based methods for analyzing and controlling smart systems have recently attracted a great deal of interest \cite{Tabuada2009}. 
Since the dynamics of these smart systems, such as smart buildings, autonomous vehicles, and intelligent transportation, are almost always complex (nonlinear or hybrid), \yong{it is desirable to compute a simpler conservative approximation or abstraction that ``includes'' the dynamical characteristics of the original systems, for which (robust) controller/estimator designs may be easier than for the original complex systems.}
Using these approximate systems, controllers that are correct-by-construction with respect to reachability and safety specifications can be synthesized efficiently, see e.g., \cite{althoff2008reachability, Girard2012, Alimguzhin2017}, and similarly, guarantees for estimator designs also apply to the original complex systems \cite{Singh2018}. 
%For example, 

\emph{Literature Review.} The abstraction process typically involves partitioning the state space of the original system into a finite number of regions and approximating its dynamics locally in each region by a simpler dynamics, which is possibly conservative affine or polynomial approximations of the analyzed system \cite{Girard2012}.
When the system state moves from one region to another, the dynamics of the approximate system also switches accordingly.
That is to say, the approximate system behaves like a hybrid system and thus, the abstraction is also referred to as a hybridization process \cite{asarin2007hybridization,Asarin2003}.
%\shen{
%In \cite{asarin2007hybridization}, the behavior of a nonlinear system is abstracted by means of a piecewise affine hybrid system on simplices.
In \cite{asarin2007hybridization,Asarin2003}, abstraction of nonlinear systems as piecewise linear systems over a mesh with a fixed partition was studied. 
However, this evenly-sized partition of the domain of interest may not be computationally tractable as it requires a large number of discrete states/modes to make the partition size sufficiently fine. 
To reduce the number of subregions, the Lebesgue piecewise affine approximation was proposed for a class of nonlinear Lipschitz continuous functions in \cite{Azuma2010}, where the partition of the state space depends on the variation of the vector field. 
On the other hand, on-the-fly abstraction is a dynamic method where the domain construction and the abstraction process are only carried out on states that are reachable \cite{Han2006, althoff2008reachability}.
Although this method scales better into high dimensions, some drawbacks, such as error accumulation and the splitting of the currently-tracked set of states along multiple facets, still exist \cite{Bak2016}. 

%To preserve and over-approximate the dynamical characteristics of the original system, 
Abstractions are typically obtained by linear interpolation over a given region and adding the corresponding interpolation error to the simpler dynamics as bounded inputs %an additive bounded input %accounting for the approximation errors is always added to the simpler dynamics 
\cite{asarin2007hybridization,Ramdani2009}.  Hence, a set of relevant literature pertains to the analysis of %several results on analyzing the 
interpolation error bounds. % for linear interpolation that is a commonly used approach for affine abstraction have been presented. 
The size of the error bounds is important as it affects not only the approximation precision but also the computation time. 
In \cite{Stampfle2000}, optimal estimates for approximation errors in linear interpolation of functions with several degrees of smoothness were developed, while \cite{dang2010accurate} presented a coordinate transformation to get a tighter interpolation error bound. 
% for continuous functions with several degrees of smoothness were developed, while \cite{dang2010accurate} presented a coordinate transformation to get a tighter interpolation error bound. 
%in \cite{dang2010accurate}, a method for constructing abstraction using a transformation matrix, which maps the original simplex to an ``isotropic" space, was studied to get a tighter interpolation error bound. 

\emph{Contributions.} In this paper, we propose a novel piecewise affine abstraction method that over-approximates nonlinear dynamics.  %\yong{ preserves all the  characteristics of the original nonlinear systems.} 
Specifically, we bracket the original nonlinear dynamics in each bounded subregion of the state space by two piecewise affine functions instead of only having an interval-valued affine vector, in contrast to the hybridization approaches in \cite{asarin2007hybridization, dang2010accurate, Girard2012}. Moreover, we develop a mesh-based method for piecewise affine abstraction, which over-approximates the nonlinear behaviors over an entire mesh as opposed to over each simplex/mesh element, thus our approach results in less complex abstractions that can simplify reachability analysis.
 %relaxes the complex reachability analysis and is simpler in implementation. 
The novelty of our approach lies in solving a linear programming (LP) optimization that over-approximates the nonlinear function at only the grid points of a mesh with a given resolution and then accounting for the entire domain in the interior of the mesh via an appropriate correction term.  
The proposed abstraction algorithm can also obtain an arbitrarily precise approximation of a nonlinear function at the price of increasing the mesh resolution, hence the size of the LP and its computational complexity. 

Comparing with a recent abstraction method for Lipschitz continuous functions in \cite{Alimguzhin2017}, our method can apply to nonlinear functions with different degrees of smoothness including Lipschitz continuous functions.
In addition, our analysis is based on mesh elements (in contrast to point-wise analysis in \cite{Alimguzhin2017}) and this enables the use of tighter error bounds based on linear interpolation in \cite{Stampfle2000,dang2010accurate}. 
Therefore, the abstraction efficiency is improved and the number of subregions is reduced for the same desired approximation accuracy. 
Finally, using simulation examples, we demonstrate the advantages of the proposed approach over existing optimization-based approaches in \cite{Alimguzhin2017} and our prior approach using mixed-integer nonlinear programming (MINLP) \cite{Singh2018}, as well as illustrate the usefulness of the obtained abstraction for estimator design, specifically the active model discrimination problem.
%it is noted that the grid-based abstraction with state space partition in this paper extends the abstraction method in our recent paper \cite{Singh2018}, where an optimized-based approach is used to find uncertain affine approximation without partitioning the state space.  

%!TEX root = main.tex
\section{Background} \label{sec_background}

%In this section, we introduce some notations and definitions, and describe the modeling framework. 

\subsection{Notation}

For a vector $v \in \mathbb{R}^n$ and a matrix $M \in \mathbb{R}^{p \times q}$, $\|v\|_i$ and $\|M\|_i$ denote their  (induced) $i$-norm with $i=\{1,2,\infty\}$.
%Let $x \in \mathbb{R}^n$ denote a vector. 
%The Euclidean norm of $x$ is denoted by $\|x\|$.
%For a matrix $A\in \mathbb{R}^{m\times n}$, $\|A\|$ denotes the norm induced by the Euclidean norm for vectors.
$[n]$ is an initial segment ${1,2, \ldots, n}$ of the natural numbers.
%A function $f$ is $\lambda-$Lipchitz continuous on $\Omega$ if there exist $\lambda \in \mathbb{R}$ such that $\forall x,y \in \Omega$, $\| f(x) - f(y) \| \le \lambda \| x-y \| $.

\subsection{Modeling Framework and Definitions} \label{sec:model}

Consider a nonlinear system $\mathcal{G}$ described by 
\begin{gather}
\begin{array}{c}
x^{+} = f(x,u), \label{nonl_sys}\\
\text{subject to } g(x,u) \leq 0, 
\end{array}
\end{gather}
where $x \in \mathcal{X}$ is the system state at the current time instant with a closed interval domain $\mathcal{X} = [a_{x}, b_{x}]^n \subset \mathbb{R}^{n}$, %being a connected and closed subset of $\mathbb{R}^{n}$,  
$ u \in \mathcal{U}$ is the control input with a closed interval domain $\mathcal{U} = [a_{u}, b_{u}]^m\subset \mathbb{R}^{m}$ %being a connected and closed subset of $\mathbb{R}^{m}$ 
and $f:\mathcal{X}\times \mathcal{U} \to \mathbb{R}^n$, $g:\mathcal{X}\times \mathcal{U}  \to \mathbb{R}^q$ are continuous vector fields (that belong to several smoothness classes). 
Specifically, we consider $f$ and $g$ that are Lipschitz continuous (with constant $\lambda$), $C^0$, $C^1$, and $C^2$ functions.
For discrete-time systems, $x^{+}$ denotes the state at the next time instant while for continuous-time systems, $x^{+}=\dot{x}$ is the time derivative of the state. Moreover, we define a \emph{cover} of the compact state-input domain $\mathcal{X}\times \mathcal{U}  \subseteq \mathbb{R}^{n+m}$, where the domain is divided into $p$ subregions that constitute its \textit{cover}:

\begin{definition}[Cover] \label{def_cover}
A \textit{cover} $\mathcal{I}$ of the closed bounded region $\mathcal{X} \times \mathcal{U}  \subset \mathbb{R}^{n+m}$ is a collection of $p$ subregions $\mathcal{I} = \{ I_i | i \in [p]\} $ such that $\mathcal{X} \times \mathcal{U}\subseteq \bigcup_{i =1}^{p} I_i$.
\end{definition}

Note the cover can be different for each dimension of the vector-valued $f$. However, for simplicity and without loss of generality, we assume that the cover is the same for all elements of $f$, and similarly, for the vector-valued $g$.
In particular, we will consider a cover whose subregions are uniform meshes, defined as:

\begin{definition}[Uniform Mesh] \label{mesh} 
A \textit{uniform mesh} of each subregion $I_i\subseteq \mathcal{X} \times \mathcal{U}$ is a collection of partitions, called mesh elements, with $r_i$ grid points along each direction/dimension $j$, for all $j\in[n+m]$. 
$r_i$ is considered as the resolution for the mesh in each subregion $I_i$ (or $r$ only if $r_i$ is the same for all subregions). 
\end{definition}

Moreover, we define the diameter of a polytope as:
\begin{definition}[Diameter] \label{diameter} 
The \textit{diameter} $\delta$ of a polytope is the greatest distance between two vertices of the polytope.
\end{definition}

\begin{remark} \label{remark_mesh} 
The uniform mesh for each subregion in this paper is a hyperrectangular mesh generated by a uniform grid of $r$ mesh points in every direction/dimension, where each mesh element is itself a hyperrectangle.
Based on this uniform hyperrectangular mesh element, we can easily obtain  simplicial mesh elements.
Consider the hyperrectangle $[a,b]^{n+m} $ and let $\Theta$ be the set of permutation of $[n+m]$.  
For all $\theta = (j_1, \ldots, j_{n+m}) \in \Theta$, the set $\mathcal{S}_{\theta} = \{ z \in [a,b]^{n+m}: a \le z_{j_1} \le \cdots \le z_{j_{n+m}} \le b \}$ is a simplex of $\mathbb{R}^{n+m}$. 
A proof of this can be found in \cite{Kuhn1960}.
Based on the Definition \ref{diameter}, it is easy to verify that the resulting simplicial mesh elements have the same diameter and vertices as the hyperrectangular mesh element, a fact that we will use in Lemma \ref{lemma}.
\end{remark}

%To over-approximate/abstract the nonlinear dynamics in \eqref{nonl_sys}, the bounded region $\mathcal{X} \times \mathcal{U}$ of the dynamical system is divided into $p$ subregions that constitute its \textit{cover}. 

Then, for each subregion $I_i \in \mathcal{I}$ that covers the domain of interest, our goal is to over-approximate/abstract the nonlinear  $f$ by a pair of affine functions $\underline{f}_i$ and $\overline{f}_i$ such that for all $(x,u) \in I_i$, we have that $\underline{f}_i(x,u) \le f(x,u) \le \overline{f}_i(x,u)$. 
%xIn this paper, we assume that the functions $\underline{f}_i(x,u)$ and $\overline{f}_i(x,u)$ are 
These affine functions with respect to $f$ over $I_i \in \mathcal{I}$ are %defined as 
\begin{align}
\underline{f}_i(x,u) = \underline{A}_i x + \underline{B}_i u + \underline{h}_i, \\
\overline{f}_i(x,u) = \overline{A}_i x + \overline{B}_i u + \overline{h}_i,
\end{align}
where the matrices $\underline{A}_i$, $\overline{A}_i$, $\underline{B}_i$, $\overline{B}_i$, and the vectors $\underline{h}_i$ and $\overline{h}_i$ are constant and of appropriate dimensions. 
Let $(\underline{\mathcal{F}}, \overline{\mathcal{F}})$ be a pair of families of affine functions with $\underline{\mathcal{F}} = \{\underline{f}_1, \ldots, \underline{f}_p\}$ and $\overline{\mathcal{F}} = \{\overline{f}_1, \ldots, \overline{f}_p\}$. 
Then, the nonlinear function $f: \mathcal{X} \times \mathcal{U} \rightarrow \mathbb{R}^n$ is over-approximated with a pair of affine families $(\underline{\mathcal{F}}, \overline{\mathcal{F}})$ over a cover $\mathcal{I}$ (i.e., a pair of piecewise affine functions) if $\underline{f}_i(x,u) \le f(x,u) \le \overline{f}_i(x,u)$, $\forall i\in[p]$ and $\forall (x,u) \in I_i$. 

In the same way, we over-approximate/abstract the nonlinear constraint $g$ by a piecewise affine function $\overline{g}_i$ such that for all $(x,u) \in I_i$, we have that $g(x,u) \le \overline{g}_i(x,u)  \le 0$. 
As before, define affine functions $\underline{g}_i$ and $\overline{g}_i$ over $I_i \in \mathcal{I}$ as 
\begin{align}
\underline{g}_i(x,u) = \underline{C}_i x + \underline{D}_i u + \underline{w}_i,\\
\overline{g}_i(x,u) = \overline{C}_i x + \overline{D}_i u + \overline{w}_i,
\end{align}
where the matrices  $\underline{C}_i$, $\overline{C}_i$, $\underline{D}_i$, $\overline{D}_i$, and the vectors $\underline{w}_i$, $\overline{w}_i$  are constant and of appropriate dimensions. 
Let $(\underline{\mathcal{G}}, \overline{\mathcal{G}})$ be a pair of families of affine functions with $\underline{\mathcal{G}} = \{\underline{g}_1, \ldots, \underline{g}_p\}$ and $\overline{\mathcal{G}} = \{\overline{g}_1, \ldots, \overline{g}_p\}$. 
The nonlinear constraint function $g(x,u) \le 0$ can be over-approximated with an affine family $\overline{\mathcal{G}}$  over a cover $\mathcal{I}$ (i.e., a piecewise affine function) if $ g(x,u) \le \overline{g}_i(x,u)  \le 0$, $\forall i\in[p]$ and $\forall (x,u) \in I_i$. 

%Define the interval matrices $\mathcal{A}_i = [\underline{A}_i, \overline{A}_i]$, $\mathcal{B}_i = [\underline{B}_i, \overline{B}_i]$, and an intervel vector $\mathcal{W}_i  = [\underline{w}_i, \overline{w}_i]$.
%Consequently, the affine dynamics, which over-approximates the nonlinear system $\mathcal{G}$ for all $I_i \in \mathcal{X}$, is given as $\mathcal{G}^a = \{ (I_i, \mathcal{A}_i, \mathcal{B}_i, \mathcal{W}_i ) |  i \in [p]\}$.

Note that the \emph{lower} affine family $\underline{\mathcal{G}}$ is not part of the abstraction but is needed for the definition of approximation error below, which will be used as the objective function for our LP problem in Theorem \ref{theorem1}.

\begin{definition}[Approximation Error] \label{approx}
Consider %$\mathcal{X} \subseteq \mathbb{R}^n$, $\mathcal{U} \subseteq \mathbb{R}^m $ and 
a cover $\mathcal{I} = \{ I_i | i \in [p]\}$ of $\mathcal{X} \times \mathcal{U} \subset \mathbb{R}^{n+m}$. 
If a pair of affine families $(\underline{\mathcal{F}}, \overline{\mathcal{F}})$ over-approximate a nonlinear function $f$ over the cover $\mathcal{I}$, then the approximation error with respect to the nonlinear dynamics is defined as $e(\underline{\mathcal{F}}, \overline{\mathcal{F}}) = \max_{i\in[p]} \max_{(x,u) \in I_i} \|\overline{f}_i(x,u) - \underline{f}_i(x,u)\|_\infty$. 
Similarly, if a pair of affine families $(\underline{\mathcal{G}}, \overline{\mathcal{G}})$ over-approximate the nonlinear constraint $g(x,u)\le 0$ over the cover $\mathcal{I}$, then the approximation error with respect to the nonlinear constraint is defined as $e(\underline{\mathcal{G}}, \overline{\mathcal{G}}) = \max_{i\in[p]} \max_{(x,u) \in I_i} \| \overline{g}_i(x,u) - \underline{g}_i(x,u)\|_\infty$. 
\end{definition}

%!TEX root = main.tex
\section{Problem Formulation}

The piecewise affine abstraction for the nonlinear system $\mathcal{G}$ in \eqref{nonl_sys} consists of the following two abstraction problems:

\begin{problem}[Affine Abstraction of Nonlinear Dynamics] \label{problem1}
For a given nonlinear $n$-dimensional vector field $f(x,u)$ with $(x,u) \in {\mathcal{X}} \times {\mathcal{U}}$ as defined in Section \ref{sec:model} and a given desired accuracy $\varepsilon_f$, find a cover $\mathcal{I} = \{I_{1} ,\ldots\,,I_{p}\}$ and a pair of $n$-dimensional family of affine hyperplanes $\overline{\mathcal{F}}\,=\{\overline{f}_{1}, \ldots\,,\overline{f}_{p}\} $ and $\underline{\mathcal{F}}\,=\{\underline{f}_{1}, \ldots\,,\underline{f}_{p}\}$  such that: 
\begin{align}\label{eq:prob1}
\hspace*{-0.15cm}\begin{array}{l}
e(\underline{\mathcal{F}},\overline{\mathcal{F}})  \leq \varepsilon_f,\\ 
\underline{f}_{i}(x,u) \leq f(x,u) \leq \overline{f}_{i}(x,u), \, \,  \forall (x,u) \in \mathcal{I}_{i}, \forall i \in [p],\end{array}\hspace*{-0.15cm}
\end{align}
where $e(\underline{\mathcal{F}},\overline{\mathcal{F}})$ is the approximation error (see Definition \ref{approx}). The pair of affine families $(\underline{\mathcal{F}},\overline{\mathcal{F}})$ is then the abstracted model (i.e., affine abstraction) of the nonlinear dynamics.
\end{problem}
\vspace{0.2cm}
\begin{problem}[Affine Abstraction of Nonlinear Constraints] \label{problem2}
For a given nonlinear $q$-dimensional constraint $g(x,u) \leq 0 $ with $(x,u) \in {\mathcal{X}} \times {\mathcal{U}}$ as defined in Section \ref{sec:model} and a desired accuracy of $\varepsilon_g$, find a cover $\mathcal{I} = \{I_{1}, \ldots\,,I_{p}\}$ and a pair of $q$-dimensional family of affine hyperplanes $\underline{\mathcal{G}}\,=\{\underline{g}_{1},\ldots\,,\underline{g}_{p}\} $ and $\overline{\mathcal{G}}\,=\{\overline{g}_{1},\ldots\,, \overline{g}_{p}\} $  such that: 
\begin{align}
\hspace*{-0.3cm}\begin{array}{l}
e(\underline{\mathcal{G}}, \overline{\mathcal{G}})  \leq \varepsilon_g,\\
\hspace*{-0.05cm} \underline{g}_{i}(x,u) \leq 
g(x,u) \leq \overline{g}_{i}(x,u)  \leq 0, 
  \forall (x,u) \hspace{-0.05cm}\in\hspace{-0.05cm} \mathcal{I}_{i}, \forall i \hspace{-0.05cm}\in \hspace{-0.05cm} [p], \hspace{-0.15cm}
  \end{array}\hspace*{-0.25cm}
\end{align}
where $e(\underline{\mathcal{G}}, \overline{\mathcal{G}})$ is the approximation error (see Definition \ref{approx}).\\ The affine constraints $\overline{g}_i \le 0, \forall \overline{g}_i \in \overline{\mathcal{G}}$ are then the abstracted model (i.e., affine abstraction) of the nonlinear constraint.
\end{problem}
%!TEX root = main.tex
\section{Main Results on Affine Abstraction}

In this section, we will mainly focus on addressing Problem \ref{problem1}, since the same approach also directly applies to Problem \ref{problem2}.
There are two parts in solving the problem. 

In the first part, we consider the subproblem of abstracting a single pair of affine hyperplanes for the nonlinear dynamics in a single subregion $I_i \in \mathcal{I}$ using mesh-based affine abstraction. 
Unlike the recent paper \cite{Alimguzhin2017} in which only Lipchitz continuous functions have been considered, we provide a novel analysis that considers mesh elements, as opposed to point-wise analysis, which enables us to exploit the tighter bounds from the literature on linear interpolation  \cite{Stampfle2000,dang2010accurate} for several classes of continuous functions with different degrees of smoothness. % have been exploited in this paper to find tighter bounds.

Then, in the second subproblem, we extend the abstraction method from a single subregion to multiple subregions, which constitute a cover of the state space of the nonlinear dynamics. 
Specifically, we will construct an \emph{$\varepsilon_f$-accurate} cover that is composed of subregions with a pair of families of affine hyperplanes $(\overline{\mathcal{F}}, \underline{\mathcal{F}})$ such that the nonlinear dynamics $f$ is over-approximated with desired accuracy $\varepsilon_f$, i.e., \eqref{eq:prob1} holds in each subregion.

As will be demonstrated in Section \ref{sec:1d}, our abstraction method outperforms the algorithm in \cite{Alimguzhin2017} for a given $\varepsilon_f$ in terms of  computation time and number of subregions required to over-approximate a function. %for a given $\varepsilon_f$ are reduced compared to the algorithm in \cite{Alimguzhin2017}.

%In the first part, we talk about abstracting a single pair of affine hyperplanes for the nonlinear dynamics in a single subregion $I_i \in \mathcal{I}$ using mesh-based abstraction. 
%Unlike the recent paper \cite{Alimguzhin2017} in which only Lipchitz continuous functions have been considered, other properties of the given functions with different degree of smoothness have been exploited in this paper to find tighter bounds.
%Then, in the second part, we extend the abstraction method from single subregion to multiple subregions, which constitute a cover of the state space of the nonlinear dynamics. 
%Specifically, a linear cover is constructed by finding a pair of families of affine hyperplanes $(\overline{\mathcal{F}}, \underline{\mathcal{F}})$ such that the nonlinear dynamics $f$ is over-approximated with desired accuracy $\varepsilon_f$ in every subregions.
%The benefits of our abstraction method are that the computation time and number of subregions required to over-approximate a function for a given $\varepsilon_f$ are reduced compared to the algorithm in \cite{Alimguzhin2017}.
%>>>>>>> 521fd2c76f21546c752cd8e7f4c37d7bfeb7e424

\subsection{Mesh-Based Affine Abstraction of a Single Subregion}

To solve the subproblem of mesh-based affine abstraction of a single subregion, we will rely on the following result on linear interpolation error bounds over simplices:
%The following proposition is based on \cite{Stampfle2000} and \cite{dang2010accurate}:
\begin{proposition}[\hspace{-0.02cm}{\cite[Theorem 4.1  \& Lemma 4.3]{Stampfle2000}}] \label{proposition}
Let $S$ be an $(n+m)$-dimensional simplex such that $S \subseteq \mathbb{R}^{n+m}$ 
% V = $\{v_{1},v_{2},\hdots,v_{n+m}\}$ be the set of vertices of the simplex and 
with  diameter $\delta$ {(see Definition \ref{diameter})}. Let $f:S \rightarrow \mathbb{R}$ be a nonlinear function  and  let $f_{l}$ be the linear interpolation of $f$ at the vertices of the simplex $S$. Then, the approximation error bound $\sigma$  defined as the maximum error between $f$ and $f_{l}$ on $S$:
%with a lipchitz constant $\lambda$ in the domain of $S$ 
% exist affine $(n+m)$-dimensional hyperplanes $\overline{f}: S \rightarrow \mathbb{R}^ {(n+m)}$ and $\underline{f}: S \rightarrow \mathbb{R}^ {(n+m)}$ such that:
%\begin{align}
%\overline{f}(x,u) \geq f(x,u) \, \forall (x,u) \in S \label{equation_prop1}\\
%\underline{f}(x,u) \leq f(x,u) \, \forall (x,u) \in S  \label{equation_prop2}
%\end{align}
%if:
%\begin{align}
%\overline{f}(v_{i}) \geq f(v_{i}) +\sigma \, \forall i \in [n+m] \label{equation_prop3}\\
%\underline{f}(v_{i}) \leq f(v_{i}) - \sigma \, \forall i \in [n+m] \label{equation_prop4}
%\end{align}
%where:
\begin{align}
\sigma = \max_{s \, \in S} (|f(s) - f_{l}(s)|) \label{sigma}
\end{align}
is upper-bounded by 
\begin{enumerate}[(i)]
\item $\sigma \leq 2\lambda \delta_{s}$, if $f \in C^{0}$ on $S$,
%\item $\sigma \leq 2 ||f||_{\infty}$ if $f \in C^{0}(S)$
\item $\sigma \leq \lambda \delta_{s}$, if $f$ is Lipschitz continuous on $S$,
\item $\sigma \leq \delta_{s} \max_{s\in S}\|f'(s)\|_{2}$, if $f \in C^{1}$ on $S$,
\item $\sigma \leq \frac{1}{2} \delta_{s}^{2} \max_{s\in S} \|f''(s)\|_{2}$, if $f \in C^{2}$ on $S$,
\end{enumerate}
where $\lambda$ is the Lipschitz constant, $f'(s)$ is the Jacobian of $f(s)$, $f''(s)$ is the Hessian of $f(s)$ and $\delta_{s}$ is simplex ball radius that satisfies
\begin{align*}
\delta_{s} \leq \sqrt{\frac{n+m}{2(n+m+1)}} \delta.
\end{align*}
\end{proposition}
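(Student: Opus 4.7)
Since the result is cited verbatim from \cite{Stampfle2000}, the plan is to reconstruct its argument for each of the four smoothness classes, using the common tool of barycentric coordinates on the simplex. Let $v_0, v_1, \ldots, v_{n+m}$ denote the vertices of $S$ and, for any $s \in S$, let $\alpha_0, \ldots, \alpha_{n+m}$ be the unique barycentric weights satisfying $s = \sum_j \alpha_j v_j$, $\alpha_j \geq 0$, and $\sum_j \alpha_j = 1$. Then the linear interpolant admits the explicit representation $f_l(s) = \sum_j \alpha_j f(v_j)$, so
\[
f(s) - f_l(s) \;=\; \sum_j \alpha_j \bigl( f(s) - f(v_j) \bigr),
\]
which reduces the task to bounding the pointwise differences $|f(s) - f(v_j)|$ under each smoothness hypothesis and then taking the convex combination.

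For cases (i) and (ii), I would apply the continuity/Lipschitz hypothesis together with a geometric bound on $\|s - v_j\|_2$. Since $S$ is contained in its circumscribed ball of radius $\delta_s$, the triangle inequality yields $\|s - v_j\|_2 \leq 2\delta_s$, giving case (i) directly as $\sigma \leq 2\lambda \delta_s$. For case (ii), the stronger conclusion $\sigma \leq \lambda\delta_s$ follows from a sharpened estimate on the weighted quantity $\sum_j \alpha_j \|s - v_j\|_2$ that exploits the fact that both $f$ and $f_l$ are Lipschitz on $S$ and that their difference vanishes at every vertex; informally, the factor of $2$ is removed by playing the barycentric weights $\alpha_j$ off of the corresponding edge lengths.

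For cases (iii) and (iv), I would invoke Taylor expansion around the sample point $s$. In the $C^1$ case, the mean value inequality gives $|f(s) - f(v_j)| \leq \max_{\xi \in S} \|f'(\xi)\|_2 \cdot \|s - v_j\|_2$, which combined with the same barycentric weighting and the diameter estimate yields $\sigma \leq \delta_s \max \|f'\|_2$. In the $C^2$ case, the crucial observation is that the first-order Taylor term cancels in the weighted sum: expanding $f(v_j) = f(s) + f'(s)(v_j - s) + \tfrac{1}{2}(v_j - s)^{\top} f''(\xi_j)(v_j - s)$ and weighting by $\alpha_j$, the gradient contribution vanishes because $\sum_j \alpha_j (v_j - s) = 0$ by the barycentric identity, leaving only the second-order remainder bounded by $\tfrac{1}{2}\delta_s^{\,2} \max_{\xi \in S}\|f''(\xi)\|_2$.

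The main obstacle is the geometric step that relates the weighted vertex distances to the simplex ball radius $\delta_s$, and in turn to the diameter $\delta$. The inequality $\delta_s \leq \sqrt{(n+m)/(2(n+m+1))}\,\delta$ is the classical estimate for the circumradius of a simplex in terms of its diameter, tight for the regular simplex; its derivation locates the circumscribed ball and uses the explicit circumradius formula for a regular $d$-simplex of given edge length. Since the entire proposition is taken verbatim from \cite[Theorem 4.1 \& Lemma 4.3]{Stampfle2000}, the proof presented in the paper would simply appeal to that reference rather than rederive these geometric constants.
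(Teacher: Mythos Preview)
The paper does not prove this proposition at all; it is stated as a citation of \cite[Theorem 4.1 \& Lemma 4.3]{Stampfle2000} and used as a black box, exactly as you anticipate in your final paragraph. Your sketch of Stampfle's argument is broadly on the right track, though the step that removes the factor of $2$ in cases (ii)--(iv) is not the vague ``playing the weights off edge lengths'' you describe but the clean identity $\sum_j \alpha_j \|s - v_j\|_2^2 = \delta_s^2 - \|s - c\|_2^2 \leq \delta_s^2$ (with $c$ the circumcenter), which directly controls the weighted second moment and, via Jensen, the weighted first moment as well.
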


According to \cite{Stampfle2000}, all factors are the best possible, while \cite{dang2010accurate} proposes a mapping of the original simplex to
to an ``isotropic" space to obtain a better bound for the simplex ball radius $\delta_s$. On the other hand, the Lipschitz constant $\lambda$ for $f$ on $S$ can be computed using well-known techniques, e.g., \cite{mladineo1986algorithm}, while the constants for cases (iii) and (iv) above can be computed using any off-the-shelf optimization software.

Moreover, we derive a useful lemma as follows:
\begin{lemma}\label{lem:1}
Let $f_1$ and $f_2$ be affine hyperplanes on the same $(n+m)$-dimensional simplicial domain $S_k\subseteq \mathbb{R}^{n+m}$ with vertex set $\mathcal{V}_k = \{v_{k_1},\ldots,v_{k_{n+m+1}}\} $. Suppose that 
\begin{align}
f_1(v_{k_i}) &\geq f_2(v_{k_i}), \, \, \forall \, \, i \in [n+m+1]. \label{proofeqlem1}
\end{align}
Then, %the following holds:
%\begin{align*}
$f_1(s) \geq f_2(s), \, \, \forall \, s  \in \, S$.
%\end{align*}
\end{lemma}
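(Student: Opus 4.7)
The plan is to reduce the two-function inequality to a one-function nonnegativity statement by setting $h = f_1 - f_2$. Since $f_1$ and $f_2$ are both affine on $S_k$, their difference $h$ is also an affine function on $S_k$, and the hypothesis \eqref{proofeqlem1} translates directly into $h(v_{k_i}) \geq 0$ for every vertex $v_{k_i} \in \mathcal{V}_k$. The goal thereby reduces to showing that an affine function that is nonnegative at every vertex of a simplex is nonnegative on the entire simplex.

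Next, I would invoke the defining property of a simplex, namely that every point $s \in S_k$ can be written uniquely as a convex combination of its vertices via barycentric coordinates: $s = \sum_{i=1}^{n+m+1} \lambda_i v_{k_i}$ with $\lambda_i \geq 0$ and $\sum_{i=1}^{n+m+1} \lambda_i = 1$. Because $h$ is affine, it commutes with convex (in fact, arbitrary affine) combinations, so
\begin{align*}
h(s) = h\Bigl(\sum_{i=1}^{n+m+1} \lambda_i v_{k_i}\Bigr) = \sum_{i=1}^{n+m+1} \lambda_i\, h(v_{k_i}).
\end{align*}
Each summand is a nonnegative weight times a nonnegative vertex value, hence $h(s) \geq 0$, which gives $f_1(s) \geq f_2(s)$ for all $s \in S_k$, as required.

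There is no substantive obstacle in this argument; the only item deserving of care is justifying that an affine map respects convex combinations. I would do this briefly by writing $h(x) = a^\top x + b$ for some vector $a$ and scalar $b$ (since the difference of two affine hyperplanes is affine), and then observing that
\begin{align*}
h\Bigl(\sum_i \lambda_i v_{k_i}\Bigr) = a^\top \sum_i \lambda_i v_{k_i} + b \sum_i \lambda_i = \sum_i \lambda_i (a^\top v_{k_i} + b) = \sum_i \lambda_i h(v_{k_i}),
\end{align*}
where the second equality uses $\sum_i \lambda_i = 1$. This completes the proof, and the result will later be used to turn the infinitely many inequality constraints $\underline{f}_i(x,u) \leq f(x,u) \leq \overline{f}_i(x,u)$ over each mesh element into finitely many vertex constraints suitable for an LP formulation.
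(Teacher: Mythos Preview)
Your proof is correct and follows essentially the same approach as the paper: both write an arbitrary point of the simplex as a convex combination of the vertices via barycentric coordinates and use affinity to pass the convex combination through the function(s). The only cosmetic difference is that you first reduce to the single affine function $h=f_1-f_2$, whereas the paper expands $f_1$ and $f_2$ separately as $A_j s + b_j$ and compares termwise; the underlying argument is identical.
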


\begin{proof}
Since $S$ is a simplex, any point $s \in S$ can be represented as $s=\sum_{i=1}^{n+m+1} \alpha_i v_{k_i}$, where $\alpha_i \geq 0$, $\sum_{i=1}^{n+m+1} \alpha_i=1$. Moreover, we represent the affine hyperplanes as 
\begin{align*}
f_1(s)&=A_1s+b_1=\hspace{-0.2cm}\sum_{i=1}^{n+m+1} \hspace{-0.2cm}\alpha_i (A_1 v_{k_i}+b_1) =\hspace{-0.2cm}\sum_{i=1}^{n+m+1} \hspace{-0.2cm}\alpha_i f_1(v_{k_i}),\\
f_2(s)&=A_2 s +b_2=\hspace{-0.2cm}\sum_{i=1}^{n+m+1}\hspace{-0.2cm} \alpha_i (A_2 v_{k_i}+b_2)=\hspace{-0.2cm}\sum_{i=1}^{n+m+1} \hspace{-0.2cm}\alpha_i f_2(v_{k_i}).
\end{align*}
Since \eqref{proofeqlem1} holds by assumption and $\alpha_i \geq 0$, the result follows directly from the above.
\end{proof}

Armed with the above interpolation error bounds and lemma, we can obtain the following lemma and theorem using a novel analysis that considers mesh elements for each subregion, as opposed to point-wise analysis in \cite{Alimguzhin2017}, resulting in tighter bounds and more effective abstraction.

\begin{lemma} \label{lemma}
%Given a mesh that divides the polytopic domain $D$ into hyper-rectangular subdomains

Given a nonlinear function $f: I \rightarrow \mathbb{R}^n$ with a hyperrectangular domain $I \subset \mathbb{R}^{n+m}$ for any subregion $I \in \mathcal{I}$, let $\mathcal{V} = \{{v}_{1}, {v}_{2}, \hdots, {v}_{l} \}$ be a set of $l$ grid points of a uniform mesh of the subregion $I$ (see Definition \ref{mesh}). %such that  $\mathcal{V} = \{\textbf{v}_{1}, \textbf{v}_{2}, \hdots, \textbf{v}_{l} \}$, and 
Suppose that we have affine hyperplanes $f_{u}$ and $f_{b}$ such that:
\begin{align}  
f_{u}({v}_{i}) &\geq f({v}_{i}), \, \forall i \in [l] \label{equation_f_{u}}, \\
f_{b}({v}_{i}) &\leq f({v}_{i}), \, \forall i \in [l] \label{equation_f_{b}},
\end{align}
then, the affine hyperplanes $\overline{f}$ and $\underline{f}$ over-approximate the function $f$ in the entire subregion $I$, i.e., 
\begin{align}
\overline{f}(x,u) &= f_{u}(x,u) + \sigma \geq f(x,u), \, \,\forall (x,u) \in I \label{equation_fbar1},\\
\underline{f}(x,u) &= f_{b}(x,u) - \sigma \leq f(x,u),  \, \,\forall (x,u) \in I, \label{equation_fbar2}
\end{align}
where $\sigma$ is a vector of the smallest possible error bounds based on the degrees of smoothness of each element of the vector-valued function $f$ (see Proposition \ref{proposition}).
\end{lemma}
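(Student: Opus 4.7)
The plan is to combine the interpolation error bound from Proposition \ref{proposition} with the vertex-comparison result from Lemma \ref{lem:1}. The whole argument is naturally done componentwise, so I would first fix an index $j \in [n]$ and treat $f_j$, $(f_u)_j$, $(f_b)_j$ as scalar-valued functions on $I$; the bounds $\sigma_j$ then come from the appropriate smoothness case of Proposition \ref{proposition}. Passing back to the vector $\sigma$ at the end is just a componentwise packaging.

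Next, I would use Remark \ref{remark_mesh} to triangulate each hyperrectangular mesh element of the uniform mesh into $(n+m)$-dimensional simplices via the permutation construction $\mathcal{S}_\theta$. The essential point, already highlighted in the remark, is that these simplices inherit exactly the vertices and diameter of the parent hyperrectangular mesh element. Consequently, every simplex vertex arising in the triangulation lies in the grid set $\mathcal{V}$, so the hypotheses \eqref{equation_f_{u}}--\eqref{equation_f_{b}} apply at every simplex vertex. The union of all these simplices covers $I$.

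Now fix one such simplex $S_k$ with vertex set $\mathcal{V}_k \subseteq \mathcal{V}$, and let $f_{l,j}$ denote the linear (affine) interpolant of $f_j$ on $S_k$ at the vertices $\mathcal{V}_k$. Proposition \ref{proposition} gives $|f_j(s) - f_{l,j}(s)| \le \sigma_j$ for every $s \in S_k$, where $\sigma_j$ is the tightest bound available from the smoothness class of $f_j$ (with simplex ball radius controlled by the hyperrectangle's diameter $\delta$). At each vertex $v \in \mathcal{V}_k$, the interpolation property gives $f_{l,j}(v) = f_j(v)$, so \eqref{equation_f_{u}} yields $(f_u)_j(v) \ge f_j(v) = f_{l,j}(v)$ and similarly \eqref{equation_f_{b}} yields $(f_b)_j(v) \le f_{l,j}(v)$. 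Since $(f_u)_j$, $(f_b)_j$, and $f_{l,j}$ are all affine on $S_k$ and the vertex inequalities hold, Lemma \ref{lem:1} lifts them to the whole simplex: $(f_u)_j(s) \ge f_{l,j}(s)$ and $(f_b)_j(s) \le f_{l,j}(s)$ for every $s \in S_k$. Combining with the interpolation error bound gives, for every $s=(x,u) \in S_k$,
\begin{align*}
f_j(x,u) &\le f_{l,j}(x,u) + \sigma_j \le (f_u)_j(x,u) + \sigma_j = \overline{f}_j(x,u), \\
f_j(x,u) &\ge f_{l,j}(x,u) - \sigma_j \ge (f_b)_j(x,u) - \sigma_j = \underline{f}_j(x,u).
\end{align*}
Since the simplices cover $I$ and the argument is uniform in $j \in [n]$, this establishes \eqref{equation_fbar1}--\eqref{equation_fbar2} everywhere on $I$.

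The only nontrivial step is the lifting of the vertex inequalities to the whole simplex, which is exactly what Lemma \ref{lem:1} is designed for; everything else is bookkeeping (the triangulation of Remark \ref{remark_mesh}, the componentwise reduction, and the direct application of Proposition \ref{proposition}). The main subtlety to be careful about is that the bound $\sigma_j$ must be chosen uniformly over all simplices in the triangulation of $I$, which is immediate because they share the same diameter $\delta$ and because the smoothness constants (Lipschitz constant, $\max \|f'_j\|_2$, $\max \|f''_j\|_2$) are taken as suprema over $I$.
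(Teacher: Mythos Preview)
Your proposal is correct and follows essentially the same route as the paper's own proof: triangulate each hyperrectangular mesh element into simplices via Remark \ref{remark_mesh}, compare the affine functions $f_u$ and $f_l$ on each simplex using Lemma \ref{lem:1}, and then invoke the interpolation error bound from Proposition \ref{proposition}. Your write-up is somewhat more explicit than the paper's (the componentwise reduction, the interpolation identity $f_{l,j}(v)=f_j(v)$ at vertices, and the uniformity of $\sigma_j$ across simplices), but these are refinements of presentation rather than a different argument.
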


\begin{proof}
First, we note that the given hyperrectangular mesh can be considered to be comprised of simplices with the same set of vertices as described in {Remark \ref{remark_mesh}}. Next, consider any $(n+m)$-dimensional simplex $S_k \subset I$ with vertex set $\mathcal{V}_k = \{v_{k_1},\ldots,v_{k_{n+m+1}}\} $. By assumption, there exists an affine plane $f_{u}$ that satisfies %as defined in equation 
\eqref{equation_f_{u}}, and hence also at the vertices in $\mathcal{V}_k$, i.e.,  
%then for any $(n+m)$-dimensional simplex $S_k \subseteq \mathcal{X} \times \mathcal{U}$ with vertex set $\mathcal{V}_k = \{v_{k_1},\ldots,v_{k_{n+m+1}}\} $, it follows that 
\begin{align}
f_{u}(v_{k_i}) &\geq f(v_{k_i}), \, \, \forall \, \, i \in [n+m+1], \label{proofeq1}
\end{align}
since $\mathcal{V}_k \subseteq \mathcal{V}$. Moreover, the linear interpolation of the simplex vertices, $f_{l}(x,u), \, \forall \, (x,u) \in S_k$ is a uniquely determined affine plane. Since $f_{u}$ and $f_{l}$ are both affine over the same domain, by Lemma \ref{lem:1}, we have%\eqref{proofeq1} implies the following:
\begin{align*}
f_{u}(x,u) &\geq f_{l}(x,u),  &\forall  (x,u)  \in  S_k, \\
\implies  \overline{f}(x,u)=f_{u}(x,u) + \sigma &\geq f_{l}(x,u) + \sigma, \hspace{-0.2cm} &\forall  (x,u)  \in  S_k. 
%\implies & \overline{f}(x,u) \geq f_{l}(x,u) + \sigma \, \, \forall \, (x,u)  \in \, S \\
%\implies & \overline{f}(x,u) - f_{l}(x,u) \geq \sigma \, \, \forall \, (x,u)  \in \, S
%\implies f_{u}(v_{i}) + \sigma \geq f(v_{i}) + \sigma \, \, \forall \, \, i \in [n+m]
\end{align*}
By Proposition \ref{proposition},  $f_{l}(x,u) + \sigma \hspace{-0.05cm} \ge \hspace{-0.05cm}  f(x,u), \forall  (x,u) \in S_k$, hence  
\begin{align*}
%\overline{f}(v_{i}) &\geq f(v_{i}) + \sigma \, \, \forall \, \, i \in [n+m]\\
\overline{f}(x,u) &\geq f(x,u), \, \, \forall \, \, (x,u) \in S_k.
\end{align*}
Since this result is applicable for all $S_k \subseteq I$ with the same $f_{u}$, we further have
\begin{align}
\overline{f}(x,u) \geq f(x,u), \, \, \forall \,  (x,u) \in I.
\end{align}
A similar proof can be derived to obtain \eqref{equation_fbar2}.
\end{proof}

\begin{theorem} \label{theorem1}
Given a nonlinear function $f: I\rightarrow \mathbb{R}^n$ with a hyperrectangular domain $I \subset \mathbb{R}^{n+m}$ for any subregion $I \in \mathcal{I}$, let $\mathcal{V} = \{{v}_{1}, {v}_{2}, \hdots, {v}_{l} \}$ be a set of $l$ grid points of a uniform mesh of the subregion $I$ (see Definition \ref{mesh}) and  $\mathcal{C} =  \{ {v}^{c}_{1}, \ldots, {v}^{c}_{2^{(n+m)}} \}$ be a set of the corner points of the hyperrectangular domain.
The affine hyperplanes $\overline{f}$ and $\underline{f}$ that over-approximate/abstract $f$ are given by:
\begin{align*}
\overline{f} = f_{u} + \sigma, \quad 
\underline{f} = f_{b} - \sigma,
\end{align*}
with $\sigma$ as defined in Lemma \ref{lemma}, 
%\begin{align*}
$f_{u} = \overline{A}\,x + \overline{B} \, u + h_{u}$, and %\quad 
$f_{b} = \underline{A}\,x + \underline{B} \, u + h_{b}$, 
%\end{align*}
where $\overline{A}, \underline{A}, \overline{B}, \underline{B}, h_{u}$ and $h_{b}$ are obtained from the following linear programming (LP) problem:
%\begin{equation*} %\label{min_prob}
\begin{subequations} 
\begin{align}
\nonumber &\underset{\theta, \overline{A}, \underline{A}, \overline{B}, \underline{B}, h_{u},h_{b}}{\text{min}}
 \theta \\[-0.2em]
&\text{subject to} \quad 
\overline{A}\, {x}_{i} + \overline{B} \, {u}_{i} + h_{u} \geq f({x}_{i},{u}_{i}), \label{eq:a}\\[-0.2em]
& \hspace{1.75cm} \underline{A}\, {x}_{i} + \underline{B} \, {u}_{i} + h_{b} \leq f({x}_{i},{u}_{i}), \label{eq:b}\\[-0.2em]
& \hspace{0.7cm} (\overline{A} - \underline{A}) \, {x}^{c}_{j}+(\overline{B} - \underline{B}) \, {u}^{c}_{j} + h_{u} - h_{b} \leq \theta \mathds{1}_n, \label{eq:c} \\[-0.2em]
\nonumber & \hspace{1.75cm} \forall i \in [l], \forall \, j\in [2^{(n+m)}], 
\end{align}
\end{subequations}
%\end{equation*}
%\begin{align*}
%\underset{\theta, \overline{A}, \underline{A}, \overline{B}, \underline{B}, h_{u},h_{b} }\min \, \, \,\theta
%\end{align*}
%such that:
%\begin{align}
%&\overline{A}\, {x}_{i} + \overline{B} \, {u}_{i} + h_{u} \geq f({x}_{i},{u}_{i}) \,  \forall i \in [l] \label{constr1}\\
%&\underline{A}\, {x}_{i} + \underline{B} \, {u}_{i} + h_{b} \leq f({x}_{i},{u}_{i})\,  \forall i \in [l]\label{constr2}\\
%&(\overline{A} - \underline{A}) \, {x}^{c}_{i}+(\overline{B} - \underline{B}) \, {u}^{c}_{i} + h_{u} - h_{b} \leq \theta  \, \forall \, i \in [2^{(n+m)}] \label{constr3}
%\end{align}
%where
%\begin{align*}
%&\mathcal{V} = \{ {v}_{1},  {v}_{2}, \hdots,  {v}_{l} \}, \, \, \,
%{v}_{i} = 
%\begin{bmatrix}
%{x}_{i}\\
%{u}_{i}
%\end{bmatrix}
%\forall i \in [l]\\
%&\mathcal{C} = \{ {v}^{c}_{1},  {v}^{c}_{2}, \hdots,  {v}^{c}_{2^{(n+m)}} \}, \, \, \,
%{v}^{c}_{i} = 
%\begin{bmatrix}
%{x}^{c}_{i}\\
%{u}^{c}_{i}
%\end{bmatrix} 
%\forall \, i \in [2^{(n+m)}]
%\end{align*}
%here,  
where $\mathds{1}_n$ represents the $n$-dimensional vector of ones, $(x_i, u_i)$ and $(x^c_j, u^c_j)$ are the state-input values at the grid point $v_i$ of the mesh and the vertex $c_j$ of $I$, respectively.
\end{theorem}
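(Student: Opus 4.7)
The plan is to reduce the theorem to a direct application of Lemma~\ref{lemma}, together with a convexity argument showing that constraint~\eqref{eq:c}, imposed only at the $2^{n+m}$ corners of the hyperrectangle $I$, suffices to control the width of the abstraction on the entire subregion. First, I would match the LP constraints to the hypotheses of Lemma~\ref{lemma}: constraints~\eqref{eq:a} and~\eqref{eq:b}, enforced at every grid point $v_i=(x_i,u_i) \in \mathcal{V}$, are exactly the requirements~\eqref{equation_f_{u}} and~\eqref{equation_f_{b}} for $f_u=\overline{A}x+\overline{B}u+h_u$ and $f_b=\underline{A}x+\underline{B}u+h_b$. Hence for any feasible LP solution, Lemma~\ref{lemma} applies, and I obtain $\overline{f}(x,u)=f_u(x,u)+\sigma \ge f(x,u)$ and $\underline{f}(x,u)=f_b(x,u)-\sigma \le f(x,u)$ for all $(x,u)\in I$, which is the over-approximation claim.

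Next, I would interpret the role of the objective and constraint~\eqref{eq:c}. The width
\begin{align*}
\overline{f}(x,u)-\underline{f}(x,u)=(\overline{A}-\underline{A})x+(\overline{B}-\underline{B})u+(h_u-h_b)+2\sigma
\end{align*}
is affine in $(x,u)$, and $I$ is a hyperrectangle, i.e., the convex hull of the corner set $\mathcal{C}=\{v_j^c\}_{j=1}^{2^{n+m}}$. I would then invoke the standard fact that a (componentwise) affine function attains its maximum over a polytope at some vertex, to conclude that the inequality $(\overline{A}-\underline{A})x^c_j+(\overline{B}-\underline{B})u^c_j+h_u-h_b\le \theta \mathds{1}_n$ at every corner propagates to the whole subregion. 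Because constraints~\eqref{eq:a} and~\eqref{eq:b} already force $f_u-f_b\ge 0$ on $\mathcal{V}$ (and hence, again by affinity, on all of $I$), this gives a valid bound on $\|f_u-f_b\|_\infty$ and therefore on the approximation error $e(\underline{\mathcal{F}},\overline{\mathcal{F}})$ up to the additive $2\|\sigma\|_\infty$ contribution coming from the interpolation correction. Minimizing $\theta$ in the LP then produces the tightest affine bracketing of $f$ of the prescribed form.

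Finally, I would briefly remark that the LP is feasible and its objective is bounded below by zero, so the optimum is attained: one can always take $h_u$ sufficiently large and $h_b$ sufficiently small (with $\overline{A},\underline{A},\overline{B},\underline{B}=0$) to satisfy~\eqref{eq:a}–\eqref{eq:c}. The decision variables enter linearly in all constraints and in the objective, confirming that the problem is a genuine LP.

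The main obstacle is the corner-to-interior extension for constraint~\eqref{eq:c}: the step where I must argue that imposing the width bound only at the $2^{n+m}$ corners of $I$ is equivalent to imposing it everywhere in $I$. Once this convexity observation is in place, everything else is bookkeeping: constraints~\eqref{eq:a} and~\eqref{eq:b} feed directly into Lemma~\ref{lemma} to give the over-approximation, and the $\sigma$ correction accounts for the deviation of $f$ from its linear interpolant on each simplicial piece of the mesh (via Proposition~\ref{proposition} and Remark~\ref{remark_mesh}).
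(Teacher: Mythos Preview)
Your proposal is correct and follows essentially the same route as the paper: invoke Lemma~\ref{lemma} on constraints~\eqref{eq:a}--\eqref{eq:b} to obtain the bracketing $\underline{f}\le f\le\overline{f}$ on $I$, and then use the fact that the affine difference $f_u-f_b$ attains its maximum over the hyperrectangle at a corner to justify that constraint~\eqref{eq:c} at $\mathcal{C}$ controls the width on all of $I$. Your added remarks on feasibility of the LP and the explicit $2\sigma$ contribution to the approximation error go slightly beyond what the paper spells out, but do not change the argument.
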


\begin{proof}
The first two constraints \eqref{eq:a} and \eqref{eq:b} in the linear optimization problem can be interpreted as: 
\begin{align*}
\overline{A}\,  {x}_{i} + \overline{B} \,  {u}_{i} + h_{u} = f_{u}( {v}_{i}) \geq f( {v}_{i}), \, \forall i \in [l],\\
\underline{A}\,  {x}_{i} + \underline{B} \,  {u}_{i} + h_{b} = f_{b}( {v}_{i}) \leq f( {v}_{i}), \, \forall i \in [l].\end{align*}
Based on {Lemma \ref{lemma}}, these inequalities imply that
\begin{align*}
\overline{f}(x,u) \geq f(x,u) \,, \forall (x,u) \, \in I,  \\
\underline{f}(x,u) \leq f(x,u) \,, \forall (x,u) \, \in I, 
%f_{b}( {v}_{i}) \leq f( {v}_{i})
\end{align*}
which means that \eqref{eq:a} and \eqref{eq:b} always make sure that $\overline{f}$ and $\underline{f}$ are completely over and under $f$ in $I$, as required by the definition of affine abstraction.  
Next, %we use the third constraint \eqref{eq:c} 
we wish to make $f_{u}$ and $f_{b}$ to be as close to each other as possible by minimizing $\theta$, defined as:
\begin{align*}
%\theta = \max_{i \in [l]} \|f_{u}( {v}_{i}) - f_{b}( {v}_{i})\|_\infty.
\theta = \max_{(x,u) \in \mathcal{X} \times \mathcal{U}} \|f_{u}(x,u) - f_{b}(x,u)\|_\infty.
\end{align*}

We now show that this can be rewritten as a minimization problem with the objective function $\theta$ and the third constraint \eqref{eq:c}. % with the following $l$ constraints:
%\begin{align*}
%(\overline{A} - \underline{A}) \,  {x}_{i}+(\overline{B} - \underline{B}) \,  {u}_{i} + h_{u} - h_{b} \leq \theta,  \, \forall \, i \in [l].
%\end{align*}
Consider any one dimension in $\mathbb{R}^{n+m}$ with the other dimensions arbitrarily fixed. Due to the linear nature of the difference between $f_{u}$ and $f_{b}$, the difference can only be increasing or decreasing as the considered point in $I$ moves in one direction. Because of this, the maximum difference would be at one of the ends. Since this argument applies to all dimensions, it follows that the maximum difference must be attained at one of the vertices of $I$. Hence, we only need to minimize the difference among the vertices of the $(n+m)$-dimensional hyperrectangle  $I$, which leads to the third constraint \eqref{eq:c}.
\end{proof}

%<<<<<<< HEAD
%\subsection{Multiple Domain}
%
%We use the above method in Algorithm \ref{algorithm1} as the Linearize function. 
%
%
%For the closed region $ {\mathcal{X}} \times {\mathcal{U}} = [a,b]^{n+m} \subseteq \mathbb{R}^{n+m} $, if the approximate error $e(\underline{\mathcal{F}},\overline{\mathcal{F}})$ is larger than the desired accuracy $\varepsilon$, function {\textsc{subBound}} divides the compact state domain into a finer linear cover $\mathcal{I} = \{I_{1}, \ldots, I_{2^{n+m}}\}$ by partitioning each interval $[a_j, b_j], \forall j\in [n+m]$ into two subintervals of width $(b_j-a_j)/2$ and recursively calls itself on all intervals of $\mathcal{I}$. 
%% The function {\textsc{divBound}} are illustrated by Algorithm \ref{divbound}.
%
%=======
\subsection{Mesh-Based Affine Abstraction of Multiple Subregions} \label{subsection2}

For multiple subregions, the mesh-based affine abstraction is provided in Algorithm \ref{algorithm1}, in which the abstraction method of a single subregion (see Theorem \ref{theorem1}) is considered as the \texttt{abstraction} function.
In Algorithm \ref{algorithm1}, the {\texttt{epsCover}} function is recursive in nature. 
First, the {\texttt{abstraction}} function is run in order to obtain $\overline{f},\underline{f}$ and $e(\overline{f},\underline{f})$. 
Then, the error $e(\overline{f},\underline{f})$ is compared to the desired error $\varepsilon_f$. 
If it is smaller than $\varepsilon_f$, the information about the subregion boundary ({$\tt{bound}$}) and the corresponding hyperplanes (with desired accuracy) is collected in a data structure called {$\tt{cover}$}. 
Otherwise, the function {\texttt{divBound}} divides the state domain into a finer cover $\mathcal{I} = \{I_{1}, \ldots, I_{2^{n+m}}\}$ by partitioning each interval $[a_j, b_j], \forall j\in [n+m]$ into two subintervals of width $(b_j-a_j)/2$. Thus, the region is divided into $2^{(n+m)}$ different subregions denoted by {$\tt{subBounds}$}. 
Now, each of the subregions {$\tt{subBounds}$} is recursively passed to {\texttt{epsCover}} in place of the original region until $e(\overline{f},\underline{f})$ in each newly obtained subregion has an error that is less than $\varepsilon_f$. 
In each recursion, we keep tracking of the subregion boundaries and the corresponding affine-hyperplanes and store it in the data structure {$\tt{cover}$}.

\begin{algorithm}[tp]
  \SetAlgoLined\DontPrintSemicolon
\KwData{$f$, $\tt{bound} = \mathcal{X} \times \mathcal{U}$, resolution $r$, desired accuracy $\varepsilon_f$}		
 
\SetKwFunction{abs}{epsCover}
\SetKwFunction{lin}{abstraction}
\SetKwFunction{algo}{divBounds}
% \SetKwFunction{algo}{divBound} 

%\SetKwProg{func}{function}{}{}
\SetKwProg{func}{function}{}{}

\func{\abs{$f,{\tt{bound}}, r, \varepsilon_f$}}
{  
	$(\overline{f},\underline{f}, e(\overline{f},\underline{f}))$   $\leftarrow$ \lin{$f,{\tt{bound}}, r, \varepsilon_f$} \;
	
%  \func{\algo{$bound$}}
   \eIf{$e(\overline{f},\underline{f}) \leq \varepsilon_f$}
   { 	
   ${\tt{cover}} = \{\overline{f}, \underline{f}, {\tt{bound}}\}$\\
   \KwRet $(\tt{cover})$ \\
   }
   { 	
   $\mathcal{I} \leftarrow$ \algo{$\tt{bound}$} \\
   \For{$ i = 1: 2^{n+m}$} 
	{ {${\tt{cell}}\{i\}$ = \abs{$f,I_i,r,\varepsilon_f$}}
	}
	${\tt{cover}} = \bigoplus_{i=1}^{2^{n+m}}\hspace{-0.1cm} \{{\tt{cell}}\{i\}\}$  ($\bigoplus$\,=\,concatenation) \\
   }
   \KwRet {$({\tt{cover}}, \mathcal{I})$} \;
}
\setcounter{AlgoLine}{0}
\func{\algo{$\tt{bound}$}}{
	Refer to Section \ref{subsection2} for its description\; 
	\KwRet{$(\tt{subBounds})$}
}
\setcounter{AlgoLine}{0}
\func{\lin{$f,{\tt{bound}}, r, \varepsilon_f$}}
{
	Refer to \emph{Theorem \ref{theorem1}} for its description\;
	\KwRet{$(\overline{f},\underline{f}, e(\overline{f},\underline{f}))$}
}
  \caption{Creating a $\varepsilon_f$-accurate Cover}
  \label{algorithm1}
\end{algorithm}

%!TEX root = main.tex
%\pagebreak
\section{Simulation Examples}\label{sec:examples}

In this section, we investigate the effects of the choices of various parameters on the proposed mesh-based affine abstraction algorithm. In particular, we consider the impacts of the desired accuracy $\varepsilon_f$ and approximation error bound $\sigma$ in Section \ref{sec:1d} and the resolution vector $r$ in Section \ref{sec:2d}. In addition, we compare our approach with the algorithm in \cite{Alimguzhin2017} in Section \ref{sec:1d} and with the MINLP approach from our previous work \cite{Singh2018} in Section \ref{sec:2d}.
All the examples are implemented in MATLAB on a 2.9 GHz Intel Core i5 CPU.

\subsection{One-Dimensional Example ($f(x,y)=x \cos y$)} \label{sec:1d}

In order to compare the effectiveness of our affine abstraction approach with that in \cite{Alimguzhin2017}, we begin by applying our algorithm to the same one-dimensional nonlinear function $f(x,y)=x \cos y$, on the interval $[-2,2] \times [0,2\pi]$. Since this function is infinitely differentiable, all approximation error bounds $\sigma$ from Proposition \ref{proposition} apply and these bounds are used to obtain Table \ref{table:numerical} for three different desired accuracies, $ \varepsilon_f \in \{0.05,0.1,0.2\}$. The resulting number of subregions serve as a measure for quality of the abstraction procedure because a better approximation would naturally lead to fewer subregions that are required for obtaining a given desired accuracy $ \varepsilon_f$ (cf. Figure \ref{fig:abstraction}).

\begin{table}[h!]
\begin{minipage}{0.485\textwidth}
	\centering
	\caption{Results of affine abstraction for the nonlinear function $x \cos y$ for varying desired accuracies $ \varepsilon_f$ and varying approximation error bounds $\sigma$ (shown for the entire domain) corresponding to different degrees of smoothness. } \vspace{-0.1cm}
	\label{table:numerical}
	\begin{tabular}{l l c c c }
		\hline
		%\multicolumn{2}{c}{}
		 & Desired Accuracy, $\varepsilon_f$ & $0.2$ & $0.1$ & $0.05$ \\ \hline \\[-1em]
		\multirow{2}{*}{\begin{tabular}[l]{@{}l@{}}(i) $C^0$ function\\ \ \ \ \ ($\sigma=1.351$)\end{tabular} }                                                                  & No. of Subregions   & 784  &  1024   & 4096 \\ %\cline{2-5} 
		& CPU Time (s) & 169.97  &   212.48  & 765.22  \\ \hline \\[-1em]
		\multirow{2}{*}{\begin{tabular}[c]{@{}l@{}}(ii) Lipschitz function \\ \quad \ \ ($\sigma=0.676$)\end{tabular}}                                                                  & No. of Subregions   &  256 &  976  &  3376\\ %\cline{2-5} 
		& CPU Time (s) & 55.81   &  213.15   &   674.49\\ \hline \\[-1em]
		\multirow{2}{*}{\begin{tabular}[c]{@{}l@{}}(iii) $C^1$ function \\ \qquad ($\sigma=0.478$)\end{tabular}}                                                                  & No. of Subregions   & 232  &  688  & 1024 \\ %\cline{2-5} 
		& CPU Time (s) &  50.84  &  149.83   &  212.89 \\ \hline \\[-1em]
		\multirow{2}{*}{\begin{tabular}[c]{@{}l@{}}(iv) $C^2$ function  \\ \qquad($\sigma=0.228$)\end{tabular}}                                                                  & No. of Subregions   & 64  & 232   & 256 \\ %\cline{2-5} 
		& CPU Time (s) & 14.22   &  50.77   &  56.83 \\ \hline\hline \\[-1em]
				\multirow{2}{*}{\begin{tabular}[c]{@{}l@{}}\cite{Alimguzhin2017}\footnote{Note that the approximation error bound $\sigma$ is defined differently in \cite{Alimguzhin2017}, where the error is added \emph{before} the optimization routine is executed, unlike our approach that adds the error \emph{after} the optimization step.} Lipschitz function \\ \qquad ($\sigma=1.170$)\end{tabular}}                                                                  & No. of Subregions   & 256  &  1024  & 4096 \\ %\cline{2-5} 
		& Comp. Time (s) &  57.18  &  214.40   &  786.88 \\ \hline \vspace*{-0.55cm}
%		\multirow{2}{*}{\begin{tabular}[c]{@{}c@{}}\eqref{eq:exact} +\\  \eqref{eq:Pelim}\end{tabular}} & Optimal Value   & 0.074  & 0.074    & 0.00548 \\ %\cline{2-5} 
%		& Time (s) & 36.51  & 76.70    & 88.97   \\ \hline
%		\multirow{2}{*}{\eqref{eq:pcdid}}                                                                  & Optimal Value   &      1.359  &       0.975   &     1.047    \\ %\cline{2-5} 
%		& Time (s) &    1.55&  1.57        &    1.22     \\ \hline
	\end{tabular}
	\vskip -0.1in
\end{minipage}
\vspace{-0.25cm}
\end{table}

Table \ref{table:numerical} demonstrates that our proposed abstraction algorithm outperforms the approach in \cite{Alimguzhin2017} because of the tighter bounds $\sigma$ that we can obtain, with the exception of the case when we only assume continuity but not differentiability (i.e., $x \cos y$ is a $C^0$ function). Moreover, the computation (CPU) time is proportional to the resulting number of subregions.

As above-mentioned, the choice of desired accuracy $ \varepsilon_f$ impacts on the number of subregions, where a larger $ \varepsilon_f$ leads to fewer subregions, as shown in Figure \ref{fig:abstraction}. On the other hand, the choice of approximation error bound also impacts the number of subregions, where a tighter bound leads to less subregions, as illustrated in Figure \ref{fig:abstraction2}. 

\begin{figure}[t!]\vspace{-0.2cm}
\begin{center}
		\includegraphics[scale=0.12,trim=30mm 20mm 20mm 30mm,clip]{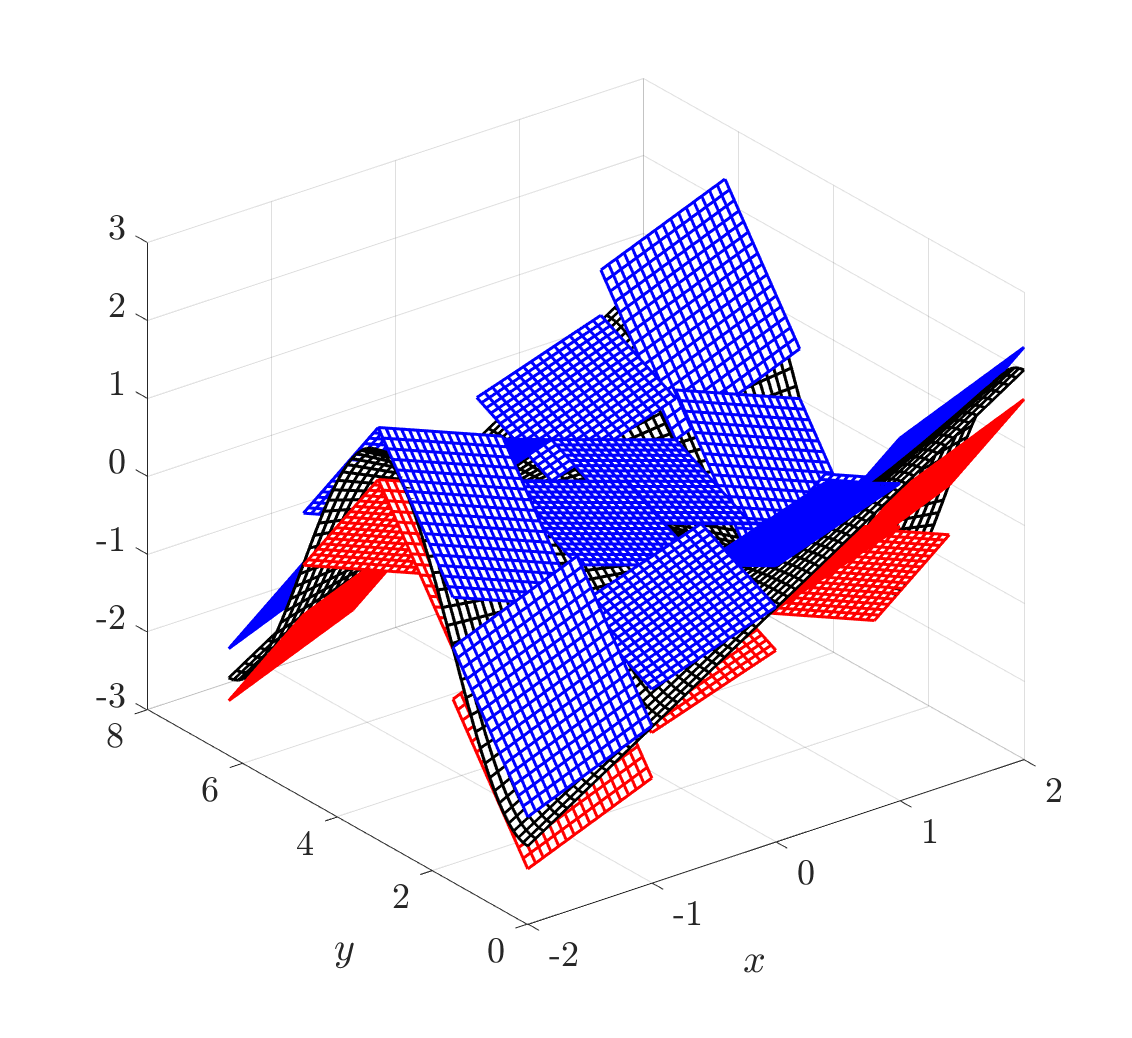}
		\includegraphics[scale=0.12,trim=30mm 20mm 20mm 30mm,clip]{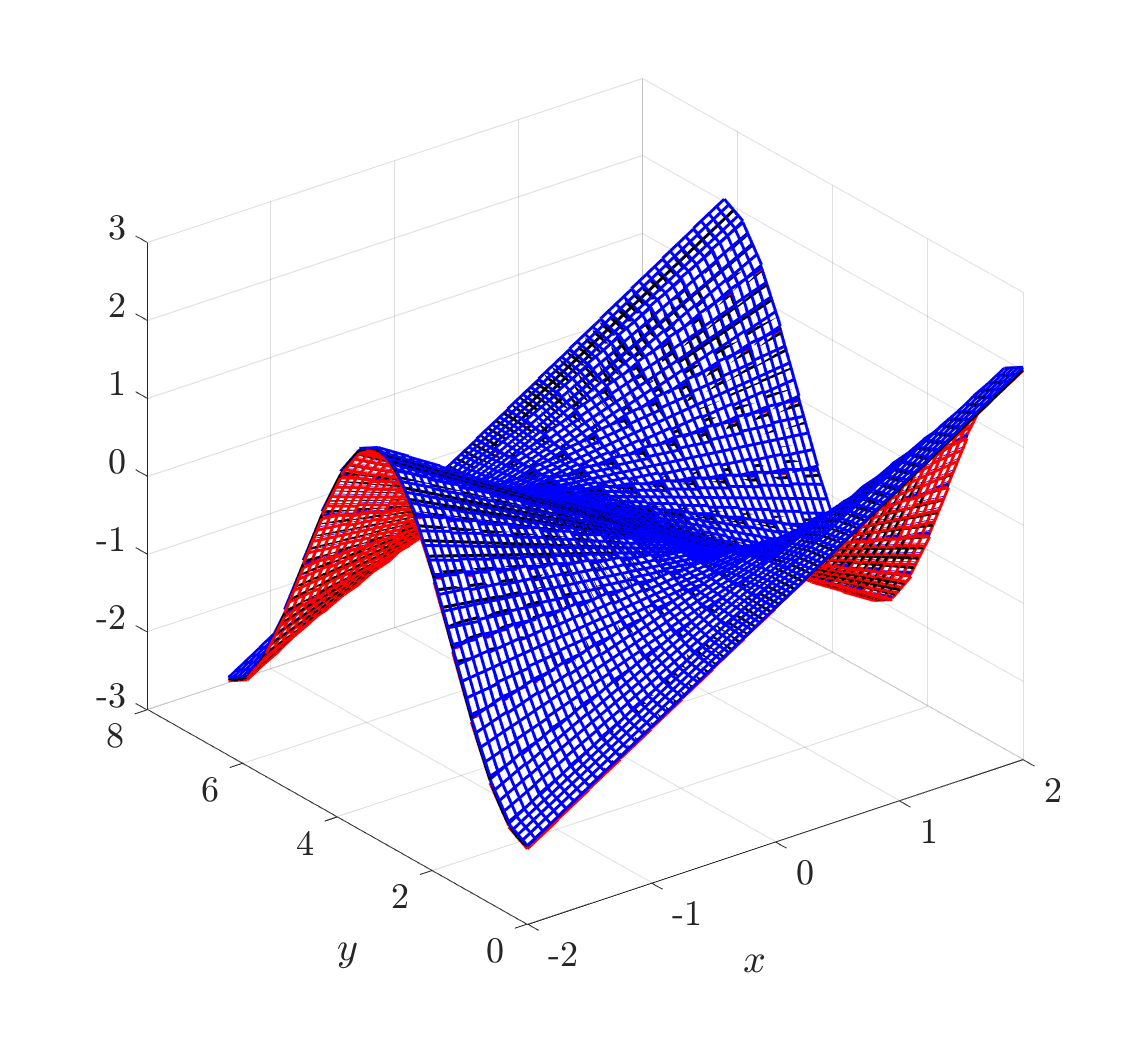}
		\caption{Affine abstraction of $x \cos y$ using an approximation error bound $\sigma=0.228$ and desired accuracies, $ \varepsilon_f=1$ (\textbf{left}) and $ \varepsilon_f=0.05$ (\textbf{right}), resulting in 16 and 256 subregions.\label{fig:abstraction} }
	\end{center}
\end{figure}

\begin{figure}[t!]\vspace{-0.5cm}
\begin{center}
		\includegraphics[scale=0.12,trim=30mm 20mm 20mm 30mm,clip]{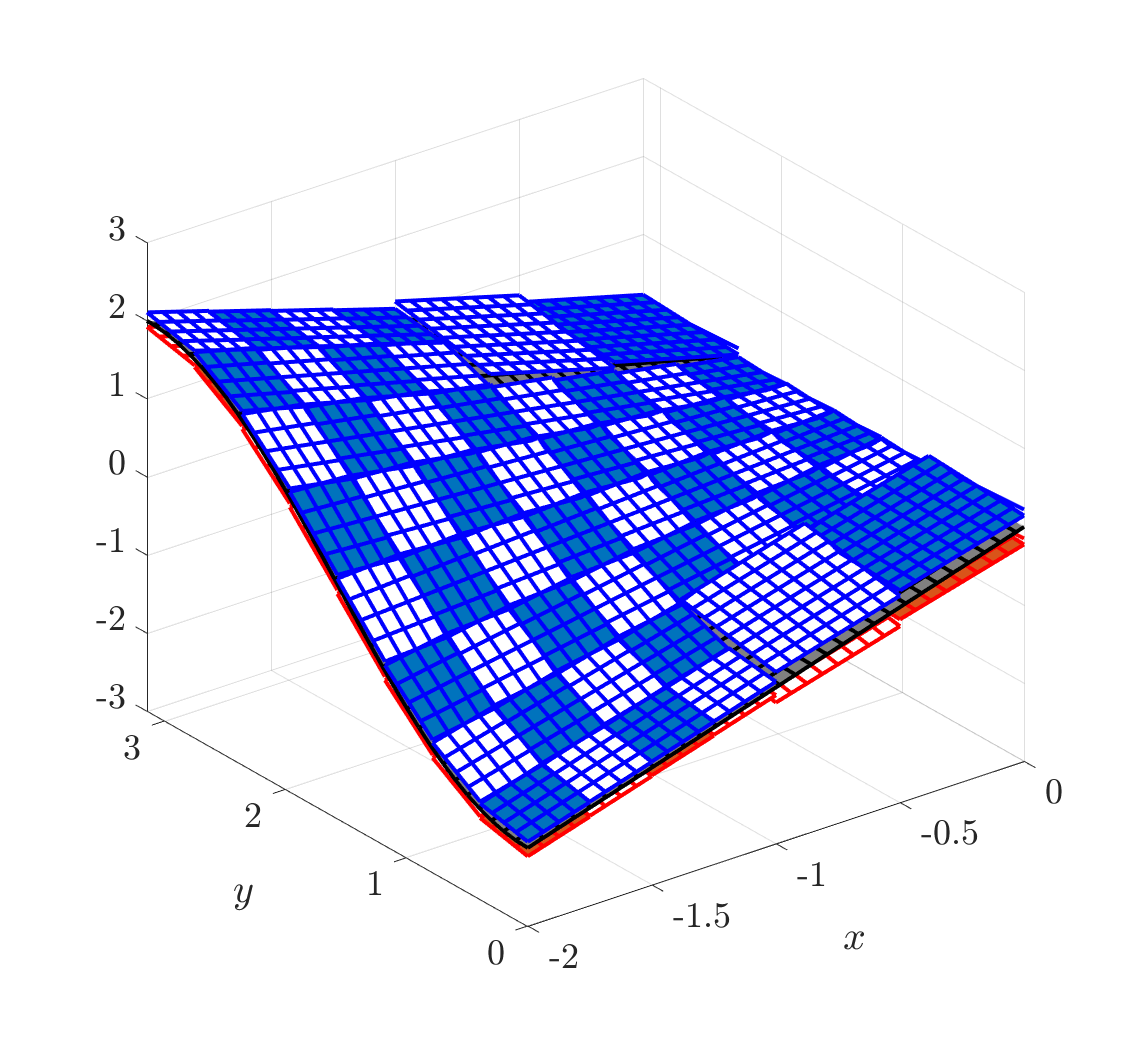}
		\includegraphics[scale=0.12,trim=30mm 20mm 20mm 30mm,clip]{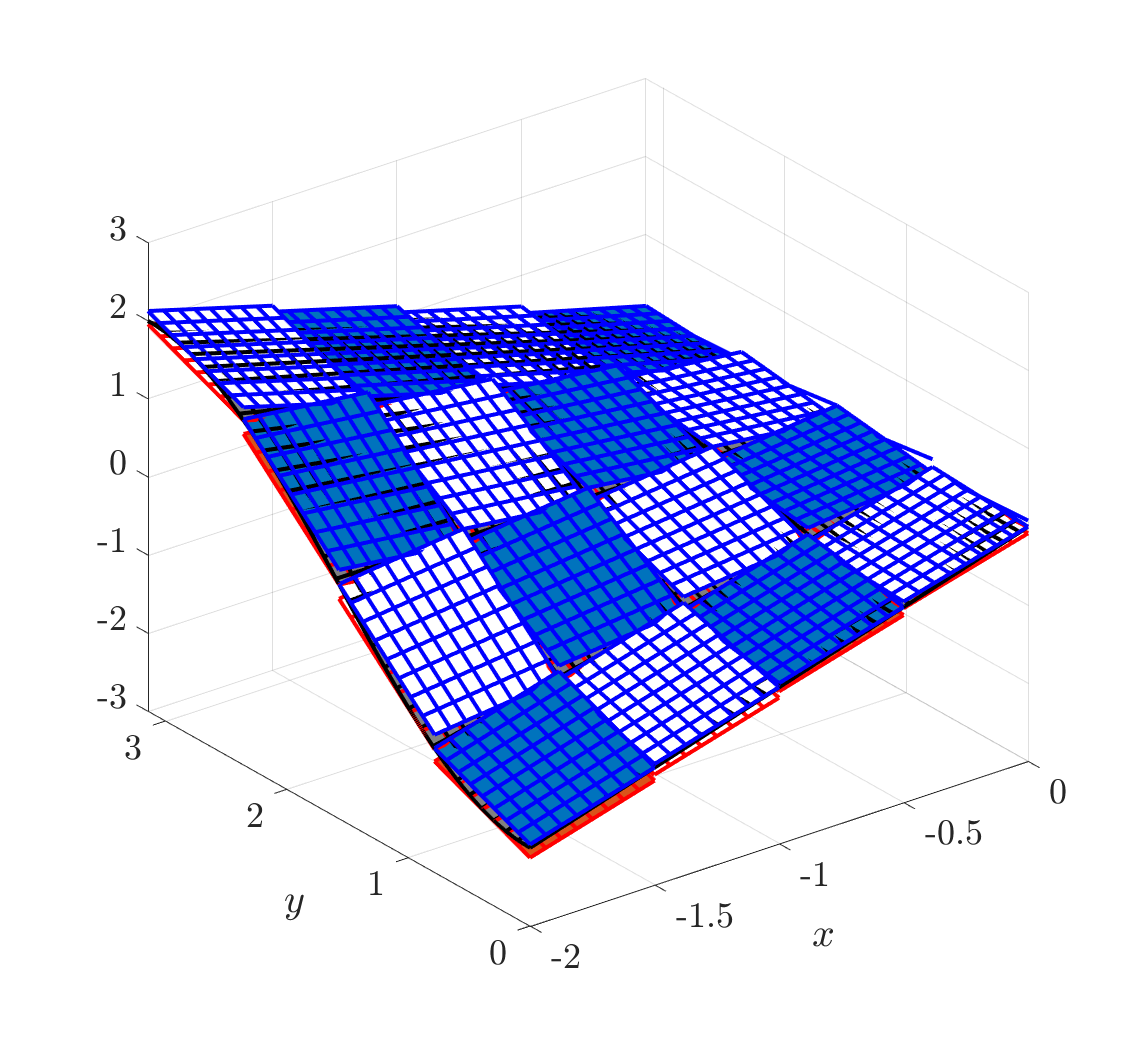}
		\caption{Affine abstraction of $x \cos y$ using a desired accuracy $ \varepsilon_f=0.4$ and approximation error bounds $\sigma=1.170$ (\textbf{left}, \cite{Alimguzhin2017}) and $\sigma=0.228$ (\textbf{right}); zoomed in $[-2,0] \times [0,\pi]$ with added emphasis (colored) on different subregions.\label{fig:abstraction2} }
	\end{center}%\vspace{-0.3cm}
\end{figure}

\subsection{Two-Dimensional Dubins Vehicle Dynamics with Application to Active Model Discrimination} \label{sec:2d}

Next, we consider the Dubins vehicles dynamics \cite{Dubins1957} that consist of two functions $$f_1 (v,\phi)= v \cos \phi, \ f_2(v,\phi)=v \sin \phi,$$
where $v$ and $\phi$ are states that represent the speed and heading angle of a vehicle, respectively. As in \cite{Singh2018}, where a mixed-integer nonlinear optimization (MINLP) approach  is used to obtain an affine abstraction, we consider only one region (i.e., without subdividing into subregions) with the speed between $20m/s$ and $30m/s$ (72 to 108 $km/h$) and the heading angle between $-25^\circ$ to $25^\circ$ ($[-0.44,0.44]$ $rad$). Moreover, we consider an objective function that minimizes $\gamma_A \| \overline{A}-\underline{A}\|_\infty +\gamma_h \|\overline{h}-\underline{h}\|_\infty$, where $\gamma_A$ and $\gamma_h$ are chosen as 0.5 and 5, respectively.

Both the proposed mesh-based and the MINLP \footnote{Additional constraints $\overline{A}\geq \underline{A}$ and $\overline{h} \geq \underline{h}$ are imposed on the MINLP-based affine abstraction formulation to ensure that the generally suboptimal MINLP formulation finds a feasible solution.} (see  \cite{Singh2018} for details) approaches are able to obtain affine abstractions of the Dubins dynamics. \yong{For very small resolution $r$, e.g., $r=25$, the mesh-based approach \yong{(polynomial-time)} obtained a worse optimal value than the MINLP approach (independent of resolution\yong{; NP-hard}), however the optimal value decreases rapidly as the resolution is increased, as shown in Figure \ref{fig:abstraction3}.} On the other hand, the computation (CPU) time of the mesh-based approach increases with increasing resolution but is still generally faster than the MINLP approach up until the resolution of over $r=3000$.
%The Dubins vehicle and intention models above are nonlinear.  Hence, we resort to the  approach in Section \ref{sec:method_OA} to obtain affine abstractions of the models. Since the nonlinearity only affects the speed $v_e$ and the heading angle $\phi_e$, we first define a suitable domain that is appropriate for the lane changing scenario. Specifically, we consider the speed of the ego car to be between 20 m/s to 30 m/s (72 to 108 km/h) and a heading angle range of $-25^\circ$ to $25^\circ$ ($[-0.44,0.44] rad$).
%Using this domain, we can obtain an affine abstraction for the reduced 2-dimensional system with $\lambda_1=1000, \lambda_2=\lambda_3=0$ and $\infty$-norms, as illustrated by Figure \ref{fig:abstraction}, and using the compact interval matrix representation, the abstracted open-loop model is given by:

\begin{figure}[t!]
\begin{center}
		\includegraphics[scale=0.32,trim=2mm 0mm 0mm 0mm]{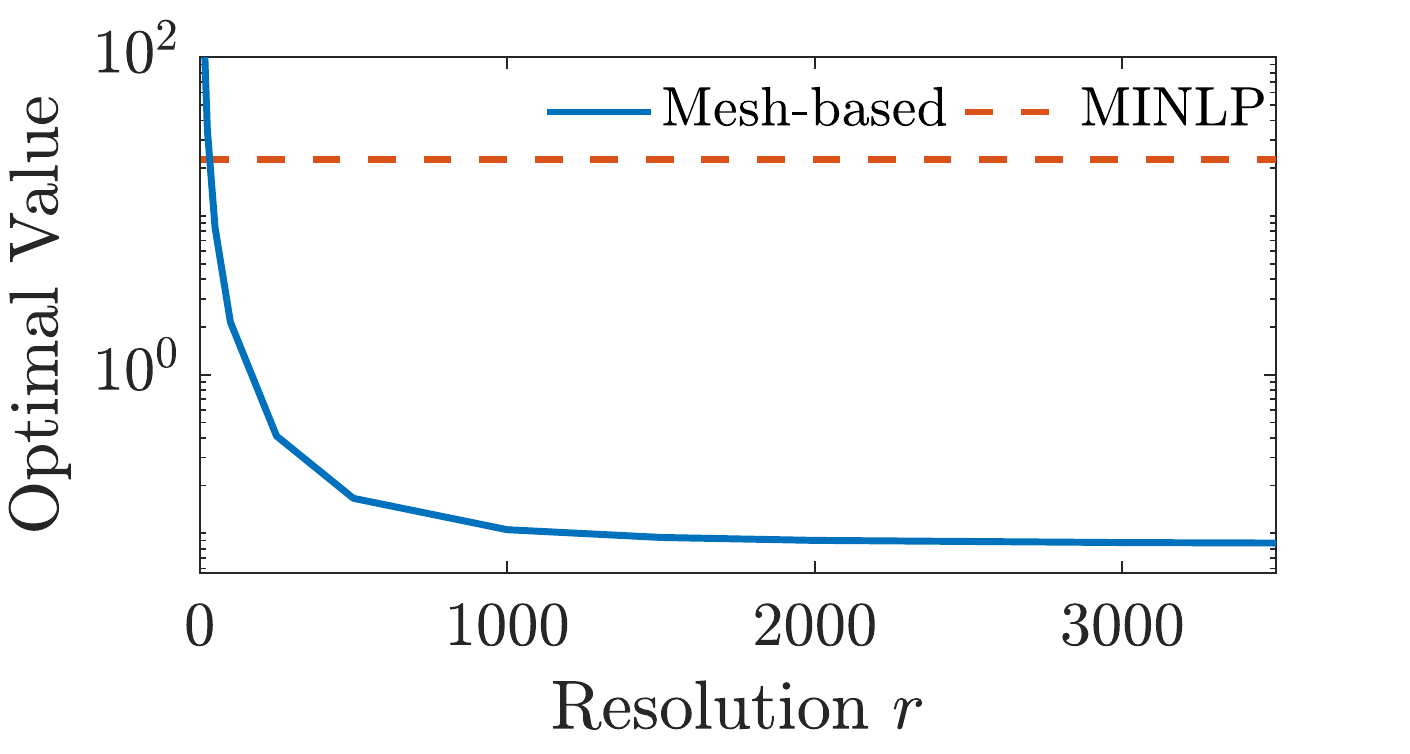}
		\includegraphics[scale=0.32,trim=10mm 0mm 10mm 0mm]{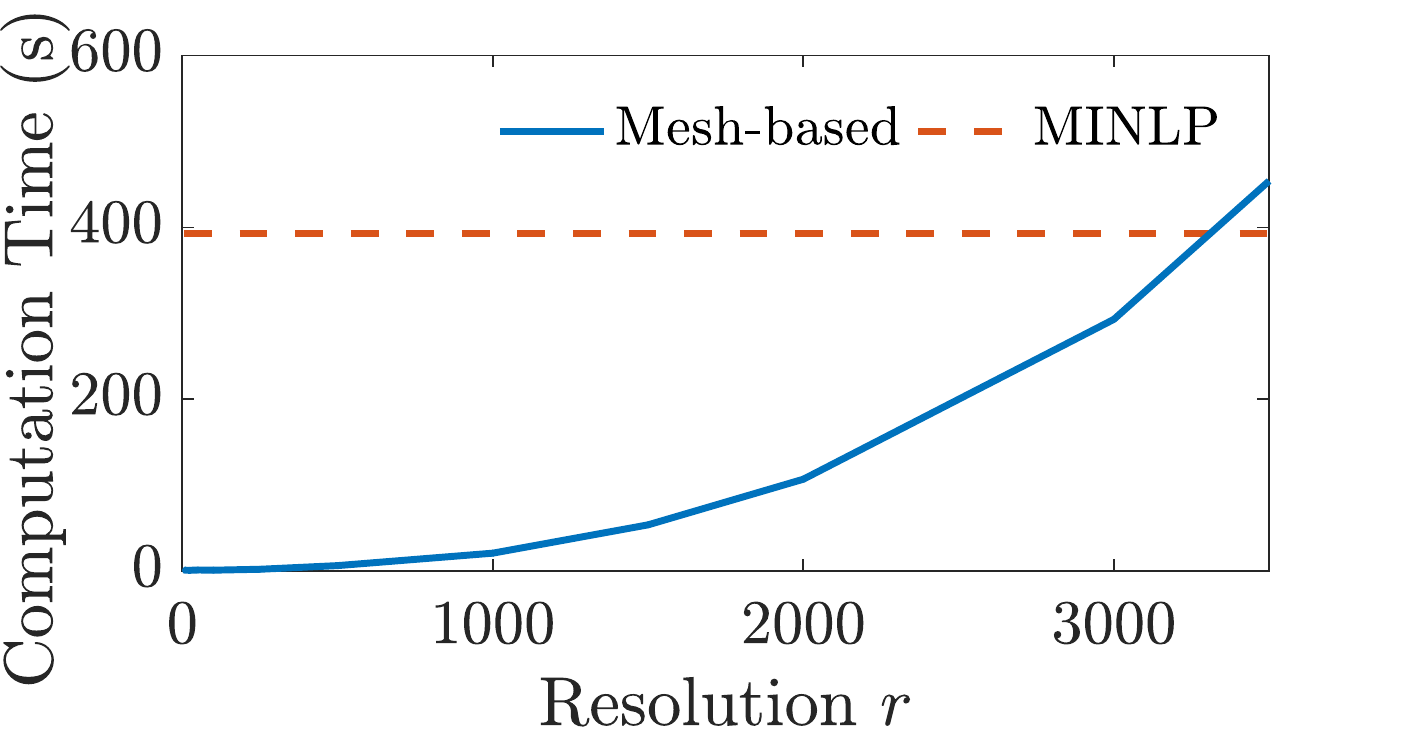}
		\caption{Decreasing optimal value (\textbf{left}) and increasing computation (CPU) time (\textbf{right}) as the resolution $r$ is increased for the mesh-based affine abstraction approach, in comparison with the values obtained from the MINLP-based approach.\label{fig:abstraction3} }\vspace{-0.3cm}
	\end{center}
\end{figure}

In addition, to illustrate the applicability of our proposed affine abstraction to estimator designs, we apply the obtained affine abstractions to solve the active model discrimination problem for identifying driver intention (without considering the ego car's responsibility, for simplicity; see \cite[Section 6]{Singh2018} for detailed models and notations), where the goal is to find the minimum input excitation that guarantees that  the different intention models are sufficiently differentiated from each other. Table \ref{table:numerical2} shows that the optimal values ($\{1,2,\infty\}$-norms of the excitation input $u_T$)   that are obtained for the active model discrimination problem based on mesh-based affine abstraction are lower than when the MINLP-based abstraction is used, as desired. However, this improvement comes at the cost of higher computation times.

\begin{table}[ht] \vspace{-0.15cm}
	\centering
	\caption{Optimal values and computation (CPU) times for active model discrimination when using affine abstractions of Dubins dynamics from MINLP- and mesh-based approaches.} %\vspace{0.1cm}
	\label{table:numerical2}
	\begin{tabular}{c c c c c}
		\hline
		& & $\|u_T\|_1$ & $\|u_T\|_\infty$ & $\|u_T\|_2$\\ \hline
	\multirow{2}{*}{\begin{tabular}[c]{@{}c@{}}MINLP-based \cite{Singh2018}\end{tabular} }                                                                  & Optimal Value   & 1.0819  &  0.4930 & 0.4171 \\ %\cline{2-5} 
		& CPU Time (s) & 10.2590  &  8.6219 & 277.7508 \\ \hline
			\multirow{2}{*}{\begin{tabular}[c]{@{}c@{}}Mesh-based \\ ($r=3500$)\end{tabular} }                                                                  & Optimal Value   & 0.4341  &  0.2937 &  0.1746 \\ %\cline{2-5} 
		& CPU Time (s) & 9.4357  & 27.4894 & 1600.5861  \\ \hline
	\end{tabular}
\end{table}\vspace{-0.15cm}

%\begin{table}[h]
%	\centering
%	\caption{Results of numerical example} \vspace{-0.3cm}
%	\label{table:numerical}
%	\begin{tabular}{c c c c }
%		\hline
%		%\multicolumn{2}{c}{}
%		 & Resolution, $r$ & Optimal Value & Comp. Time (s)\\ \hline
%	MINLP \cite{a} & - & 22.74810 & 393.25\\ \hline
%	Grid-Based & 100   & 0.15373 & 1.27 \\
%	                   & 500   & 0.08789 & 6.12 \\
%	                   & 1000 & 0.08586 & 20.49 \\
%	                   & 2000 & 0.08536 & 97.42 \\
%	                   & 3000 & 0.08526 & 269.45 \\
%	                   & 3250 & 0.08525 & 432.85 \\
%	                   & 3500 & 0.08524 & 442.43 \\
%	                   & 3750 & 0.08524 & 668.36 \\ \hline
%	\end{tabular}
%\end{table}

%\vspace{-0.1cm}
\section{Conclusion}
%\vspace{-0.1cm}
This paper presents a piecewise affine abstraction approach for nonlinear systems using tighter interpolation bounds. 
We divide the domain of interest into smaller subregions that form a cover of the domain with a desired approximation accuracy for each subregion. 
On each subregion, the nonlinear dynamics is conservatively approximated by a pair of piecewise affine functions, which brackets the original nonlinear dynamics. 
Our novel analysis allows for the use of tighter interpolation bounds, thus the proposed abstraction method achieves better time efficiency and requires less subregions for the same desired approximation accuracy when compared to existing approaches. Our method also applies to nonlinear functions with different degree of smoothness. 
We demonstrated the advantages of our approach in simulation and illustrated its applicability for the problem of active model discrimination. %, the proposed piecewise affine approximation method is applied to five different functions with different degree of smoothness, to demonstrate the performance improvement.
Future works will explore partitioning the domain of interest into subregions with a non-uniform, non-rectangular mesh, e.g., simplicial mesh, with the objective of improving the approximation quality and accuracy.

\bibliographystyle{IEEEtran}
\bibliography{biblio}

\begin{thebibliography}{10}
\providecommand{\url}[1]{#1}
\csname url@rmstyle\endcsname
\providecommand{\newblock}{\relax}
\providecommand{\bibinfo}[2]{#2}
\providecommand\BIBentrySTDinterwordspacing{\spaceskip=0pt\relax}
\providecommand\BIBentryALTinterwordstretchfactor{4}
\providecommand\BIBentryALTinterwordspacing{\spaceskip=\fontdimen2\font plus
\BIBentryALTinterwordstretchfactor\fontdimen3\font minus
  \fontdimen4\font\relax}
\providecommand\BIBforeignlanguage[2]{{%
\expandafter\ifx\csname l@#1\endcsname\relax
\typeout{** WARNING: IEEEtran.bst: No hyphenation pattern has been}%
\typeout{** loaded for the language `#1'. Using the pattern for}%
\typeout{** the default language instead.}%
\else
\language=\csname l@#1\endcsname
\fi
#2}}

\bibitem{Tabuada2009}
P.~Tabuada, \emph{Verification and control of hybrid systems: a symbolic
  approach}.\hskip 1em plus 0.5em minus 0.4em\relax Springer, 2009.

\bibitem{althoff2008reachability}
M.~Althoff, O.~Stursberg, and M.~Buss, ``Reachability analysis of nonlinear
  systems with uncertain parameters using conservative linearization,'' in
  \emph{IEEE Conference on Decision and Control}, 2008, pp. 4042--4048.

\bibitem{Girard2012}
A.~Girard and S.~Martin, ``Synthesis for constrained nonlinear systems using
  hybridization and robust controller on symplices,'' \emph{IEEE Transactions
  on Automatic Control}, vol.~57, no.~4, pp. 1046--1051, 2012.

\bibitem{Alimguzhin2017}
V.~Alimguzhin, F.~Mari, I.~Melatti, I.~Salvo, and E.~Tronci, ``Linearizing
  discrete-time hybrid systems,'' \emph{IEEE Transactions on Automatic
  Control}, vol.~62, no.~10, pp. 5357--5364, 2017.

\bibitem{Singh2018}
K.~Singh, Y.~Ding, N.~Ozay, and S.~Z. Yong, ``Input design for nonlinear model
  discrimination via affine abstraction,'' in \emph{IFAC Conference on Analysis
  and Design of Hybrid Systems}, 2018, pp. 1--8, accepted.

\bibitem{asarin2007hybridization}
E.~Asarin, T.~Dang, and A.~Girard, ``Hybridization methods for the analysis of
  nonlinear systems,'' \emph{Acta Informatica}, vol.~43, no.~7, pp. 451--476,
  2007.

\bibitem{Asarin2003}
------, ``Reachability analysis of nonlinear systems using conservative
  approximation,'' in \emph{Int. Workshop on Hybrid Systems: Computation and
  Control}.\hskip 1em plus 0.5em minus 0.4em\relax Springer, 2003, pp. 20--35.

\bibitem{Azuma2010}
S.-i. Azuma, J.-i. Imura, and T.~Sugie, ``Lebesgue piecewise affine
  approximation of nonlinear systems,'' \emph{Nonlinear Analysis: Hybrid
  Systems}, vol.~4, no.~1, pp. 92--102, 2010.

\bibitem{Han2006}
Z.~Han and B.~H. Krogh, ``Reachability analysis of nonlinear systems using
  trajectory piecewise linearized models,'' in \emph{the American Control
  Conference}, 2006, pp. 1505--1510.

\bibitem{Bak2016}
S.~Bak, S.~Bogomolov, T.~A. Henzinger, T.~T. Johnson, and P.~Pradyot,
  ``Scalable static hybridization methods for analysis of nonlinear systems,''
  in \emph{the 19th International Conference on Hybrid Systems: Computation and
  Control}.\hskip 1em plus 0.5em minus 0.4em\relax Springer, 2016, pp.
  155--164.

\bibitem{Ramdani2009}
N.~Ramdani, N.~Meslem, and Y.~Candau, ``A hybrid bounding method for computing
  an over-approximation for the reachable set of uncertain nonlinear systems,''
  \emph{IEEE Transactions on Automatic Control}, vol.~54, no.~10, pp.
  2352--52\,364, 2009.

\bibitem{Stampfle2000}
M.~St\"{a}mpfle, ``Optimal estimates for the linear interpolation error for
  simplices,'' \emph{Journal of Approximation Theory}, vol. 103, pp. 78--90,
  2000.

\bibitem{dang2010accurate}
T.~Dang, O.~Maler, and R.~Testylier, ``Accurate hybridization of nonlinear
  systems,'' in \emph{ACM International Conference on Hybrid Systems:
  Computation and Control}, 2010, pp. 11--20.

\bibitem{Kuhn1960}
H.~W. Kuhn, ``Some combinatorial lemmas on topology,'' \emph{IBM Journal of
  Research and Development}, vol.~4, no.~5, pp. 518--524, 1960.

\bibitem{mladineo1986algorithm}
R.~H. Mladineo, ``An algorithm for finding the global maximum of a multimodal,
  multivariate function,'' \emph{Mathematical Programming}, vol.~34, no.~2, pp.
  188--200, 1986.

\bibitem{Dubins1957}
L.~E. Dubins, ``On curves of minimal length with a constraint on average
  curvature, and with prescribed initial and terminal positions and tangents,''
  \emph{American Journal of Mathematics}, vol.~79, pp. 497--516, 1957.

\end{thebibliography}

\end{document}